\theoremstyle{plain}
\newtheorem{thm}{Theorem}[section]
\newtheorem{prop}[thm]{Proposition}
\theoremstyle{definition}
\newtheorem{dfn}[thm]{Definition}
\newtheorem{ex}[thm]{Example}
\numberwithin{equation}{section}
\newcommand{\G}{\Gamma}
\newcommand{\g}{\gamma}
\newcommand{\sm}{\left(\begin{smallmatrix}}
\newcommand{\esm}{\end{smallmatrix}\right)}
\newcommand{\mm}{\begin{pmatrix}}
\newcommand{\emm}{\end{pmatrix}}
\newcommand{\mbb}{\mathbb}
\newcommand{\gl}{\lambda}
\newcommand{\gL}{\Lambda}
\newcommand{\bi}{\binom}
\newcommand{\mF}{\mathcal F}
\newcommand{\mH}{\mathcal H}
\newcommand{\fJ}{\mathfrak J}
\newcommand{\fK}{\mathfrak K}
\newcommand{\mQ}{\mathcal Q}
\newcommand{\bC}{\mathbb C}
\newcommand{\bR}{\mathbb R}
\newcommand{\bZ}{\mathbb Z}
\newcommand{\md}{\operatorname{\text{$\|$}}}
\newcommand{\fS}{\mathfrak S}
\begin{document}

\title{Dirichlet series of Rankin-Cohen brackets}

%\date{today}

\author[YoungJu Choie]{YoungJu Choie$^\ast$}
%\author{YoungJu Choie}

%\address{Department of Mathematics, University of Northern Iowa,
%Cedar Falls, Iowa 50614}
\address{Department of Mathematics and
PMI (Pohang Mathematical Institute), POSTECH, Pohang 790-784,
Korea}

\email{yjc@postech.ac.kr}

\author[Min Ho Lee]{Min Ho Lee$^\dagger$}
%\author{Min Ho Lee}

\address{Department of Mathematics, University of Northern Iowa,
Cedar Falls, IA 50614, U.S.A.}

\email{lee@math.uni.edu}

\thanks{$^\ast$Supported in part by NRF20090083909 and NRF2009-0094069}
\thanks{$^\dagger$Supported in part by a summer fellowship from the University of
  Northern Iowa}

%\thanks{}

\begin{abstract}
Given modular forms $f$ and $g$ of weights $k$ and $\ell$, respectively,
their Rankin-Cohen bracket $[f,g]^{(k, \ell)}_n$ corresponding to a
nonnegative integer $n$ is a modular form of weight $k +\ell +2n$, and it
is given as a linear combination of the products of the form $f^{(r)}
g^{(n-r)}$ for $0 \leq r \leq n$.  We use a correspondence between
quasimodular forms and sequences of modular forms to express the Dirichlet
series of a product of derivatives of modular forms as a linear combination
of the Dirichlet series of Rankin-Cohen brackets.
\end{abstract}

\keywords{Quasimodular forms, modular forms, Dirichlet series}

\subjclass[2000]{11F12, 11F66}

\maketitle \pagestyle{plain}

%%%%%%%%%%%%%%%%%%%%%%%%%%%%%%%%%%%%%%%%%%%%%%%%%%%%%%%%%%%%%%%%%%%%%%%%
\section{\bf{Introduction and statement of main results}}
%%%%%%%%%%%%%%%%%%%%%%%%%%%%%%%%%%%%%%%%%%%%%%%%%%%%%%%%%%%%%%%%%%%%%%%%

Given modular forms $f$ and $g$ of weights $k$ and $\ell$, respectively,
their Rankin-Cohen bracket $[f,g]^{(k, \ell)}_n$ corresponding to a
nonnegative integer $n$ is a modular form of weight $k +\ell +2n$, and it
is given as a linear combination of the products of the form $f^{(r)}
g^{(n-r)}$ for $0 \leq r \leq n$ (see e.g.\ \cite{CM97}).  Although such
products are not modular forms, they are quasimodular forms.

Quasimodular forms generalize classical modular forms  and  first
introduced by Kaneko and Zagier in \cite{KZ95}. It appears
naturally in various places (see \cite{EO1, EO2, LR}, for
instance). One of the useful features of quasimodular forms is
that their derivatives are also quasimodular forms (see
\cite{BGHZ06, MR05}). In particular, derivatives of modular forms
are quasimodular forms. Since products of quasimodular forms are
quasimodular forms, it follows that the the products $f^{(r)}
g^{(n-r)}$ considered above are quasimodular forms.  As in the
case of modular forms, we can consider the Dirichlet series
associated to quasimodular forms by using their Fourier
coefficients.  From the formula for Rankin-Cohen brackets it
follows that the Dirichlet series of the modular forms $[f,g]^{(k,
  \ell)}_n$ can be written as the Dirichlet series of the quasimodular
forms $f^{(r)} g^{(n-r)}$.  The goal of this paper is to
express the Dirichlet series of a product of derivatives of modular forms
in terms of the Dirichlet series of Rankin-Cohen brackets.  More precisely,
we prove the following theorem:

\begin{thm}\label{maintheorem}
Given a discrete subgroup $\G$, containing translations,  of
$SL(2, \bR)$, let $\phi$ and $\psi$ be modular forms for $\G$ with
width $h$ and  weights $\mu$ and $\nu$, respectively. Then the
Dirichlet series of the quasimodular form $\phi^{(m)} \psi^{(n)}$
can be written in the form
\begin{equation} \label{E:ju}
L  (\phi^{(m)} \psi^{(n)}, s) = \sum^{m +n}_{\ell =0}
a^{m,n}_{\mu,\nu} (\ell) L ([\phi, \psi]^{(\mu, \nu)}_{m+n -\ell},
s -\ell) ,
\end{equation}
where
\begin{align} \label{E:8q}
a^{m,n}_{\mu,\nu} (\ell) &= \frac {(2\pi i)^\ell n! (\mu +m-1)!
(\nu +n -1)! (\mu +\nu +2\ell -1)!} {(\mu +\ell -1)! (\mu +\nu +2m
+2n -\ell -1)! h^\ell}\\
&\hspace{.4in} \times  \sum^{\ell}_{j=0} (-1)^j \frac {(m +n
-\ell+j)! (2\ell +\mu +\nu -j-2)!} {j! (\ell -j)! (n -\ell +j)! (\nu +\ell
-j -1)!} \notag
\end{align}
for $0 \leq \ell \leq m+n$.
\end{thm}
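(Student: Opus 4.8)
The plan is to first prove an identity among quasimodular forms expressing $\phi^{(m)}\psi^{(n)}$ as an explicit combination of derivatives of the brackets $[\phi,\psi]^{(\mu,\nu)}_{j}$, and then to read off (\ref{E:ju}) by invoking the behaviour of a Dirichlet series under differentiation. Put $N=m+n$. Since $\phi,\psi$ are modular of weights $\mu,\nu$, the products $\phi^{(p)}\psi^{(N-p)}$ ($0\le p\le N$) are quasimodular forms for $\G$ of weight $\mu+\nu+2N$ and depth $\le N$, and they span an $(N+1)$-dimensional space; on the other hand each $[\phi,\psi]^{(\mu,\nu)}_{N-\ell}$ is a modular form of weight $\mu+\nu+2(N-\ell)$, so $\bigl([\phi,\psi]^{(\mu,\nu)}_{N-\ell}\bigr)^{(\ell)}$ lies in that same space for $0\le\ell\le N$. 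Via the correspondence between quasimodular forms and sequences of modular forms, the depth-$\ell$ part of $\bigl([\phi,\psi]^{(\mu,\nu)}_{N-\ell}\bigr)^{(\ell)}$ is a nonzero multiple of the modular form $[\phi,\psi]^{(\mu,\nu)}_{N-\ell}$; as these brackets are modular of pairwise distinct weights, the $N+1$ quasimodular forms $\bigl([\phi,\psi]^{(\mu,\nu)}_{N-\ell}\bigr)^{(\ell)}$ are linearly independent, hence a basis of the span of the $\phi^{(p)}\psi^{(N-p)}$. Consequently there are constants $\beta^{m,n}_{\mu,\nu}(\ell)$, depending only on $\mu,\nu,m,n$, with
\begin{equation}\label{E:formlevel}
\phi^{(m)}\psi^{(n)} \;=\; \sum_{\ell=0}^{N}\beta^{m,n}_{\mu,\nu}(\ell)\,\bigl([\phi,\psi]^{(\mu,\nu)}_{N-\ell}\bigr)^{(\ell)} .
\end{equation}

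To determine the $\beta^{m,n}_{\mu,\nu}(\ell)$, I expand both sides of (\ref{E:formlevel}) in the basis $\{\phi^{(p)}\psi^{(N-p)}\}_{0\le p\le N}$. Substituting the defining expression of $[\phi,\psi]^{(\mu,\nu)}_{N-\ell}$ as a linear combination of the $\phi^{(r)}\psi^{(N-\ell-r)}$ and distributing the $\ell$ derivatives by the Leibniz rule yields
\[
\bigl([\phi,\psi]^{(\mu,\nu)}_{N-\ell}\bigr)^{(\ell)} \;=\; \sum_{p=0}^{N} C^{(\mu,\nu)}_{N,\ell}(p)\,\phi^{(p)}\psi^{(N-p)},
\]
with $C^{(\mu,\nu)}_{N,\ell}(p)$ an explicit alternating sum of products of three binomial coefficients (one from the Leibniz expansion, two from the Rankin--Cohen coefficients). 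Comparing coefficients of $\phi^{(p)}\psi^{(N-p)}$ in (\ref{E:formlevel}) now turns that identity into the linear system $\sum_{\ell=0}^{N}\beta^{m,n}_{\mu,\nu}(\ell)\,C^{(\mu,\nu)}_{N,\ell}(p)=\delta_{p,m}$ for $0\le p\le N$; thus $\bigl(\beta^{m,n}_{\mu,\nu}(\ell)\bigr)_{\ell}$ is the $m$-th row of the inverse of the matrix $\bigl(C^{(\mu,\nu)}_{N,\ell}(p)\bigr)$, which is invertible by the linear independence established above (its determinant being a nonzero rational function of $\mu,\nu$).

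It remains to compute this inverse in closed form and to recognise its entries as the right-hand side of (\ref{E:8q}). Because the change of basis is triangular with respect to the depth filtration, one can solve for $\beta^{m,n}_{\mu,\nu}(0),\beta^{m,n}_{\mu,\nu}(1),\dots$ recursively, peeling off successive depth layers; this collapses the answer to the single alternating sum over $j$ appearing in (\ref{E:8q}), the factor $\tfrac{1}{j!(\ell-j)!}$ recording the Leibniz expansion and the remaining quotients of factorials recording the Rankin--Cohen coefficients together with the inverted Pochhammer symbols. I expect recognising the matrix inverse in this closed form to be the main technical obstacle; after clearing the Pochhammer symbols it should reduce to standard binomial-sum identities of Vandermonde/Saalsch\"utz type.

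Finally I pass to Dirichlet series. Each $[\phi,\psi]^{(\mu,\nu)}_{N-\ell}$ is modular for $\G$, hence has a Fourier expansion $\sum_{r\ge0}c_\ell(r)e^{2\pi i rz/h}$ in the width-$h$ parameter, so $\bigl([\phi,\psi]^{(\mu,\nu)}_{N-\ell}\bigr)^{(\ell)}=(2\pi i/h)^{\ell}\sum_{r}r^{\ell}c_\ell(r)e^{2\pi i rz/h}$ and therefore
\[
L\Bigl(\bigl([\phi,\psi]^{(\mu,\nu)}_{N-\ell}\bigr)^{(\ell)},\,s\Bigr)\;=\;\Bigl(\tfrac{2\pi i}{h}\Bigr)^{\ell}\,L\bigl([\phi,\psi]^{(\mu,\nu)}_{N-\ell},\,s-\ell\bigr).
\]
Applying $L(\,\cdot\,,s)$ to (\ref{E:formlevel}) and setting $a^{m,n}_{\mu,\nu}(\ell)=(2\pi i/h)^{\ell}\beta^{m,n}_{\mu,\nu}(\ell)$ gives (\ref{E:ju}) with the coefficients as in (\ref{E:8q}).
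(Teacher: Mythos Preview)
Your overall architecture is the same as the paper's: establish the form-level identity
\[
\phi^{(m)}\psi^{(n)} \;=\; \sum_{\ell=0}^{m+n}\beta^{m,n}_{\mu,\nu}(\ell)\,\bigl([\phi,\psi]^{(\mu,\nu)}_{m+n-\ell}\bigr)^{(\ell)}
\]
and then apply $L(\,\cdot\,,s)$ together with $L(f^{(\ell)},s)=(2\pi i/h)^{\ell}L(f,s-\ell)$. The paper obtains exactly this (its Proposition~3.2 is precisely the statement that a quasimodular form equals such a sum of derivatives of its associated modular forms, and those associated modular forms are then identified with Rankin--Cohen brackets).

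There is, however, a real gap in your argument: you have not computed the $\beta^{m,n}_{\mu,\nu}(\ell)$. Saying that the matrix inversion ``should reduce to standard binomial-sum identities of Vandermonde/Saalsch\"utz type'' is a plan, not a proof; and in fact carrying out that inversion in the basis $\{\phi^{(p)}\psi^{(N-p)}\}$ directly is unpleasant. (A secondary point: your linear-independence argument tacitly assumes the brackets $[\phi,\psi]^{(\mu,\nu)}_{j}$ are all nonzero, which fails for particular pairs---e.g.\ odd-index brackets vanish when $\phi=\psi$---so the identity should be established as a formal polynomial identity in the derivatives, or for generic $\phi,\psi$.)

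The paper sidesteps the matrix inversion entirely, and this is the key idea you are missing. Using the isomorphisms $\gL^m_\gl$, $\Xi^m_\gl$ between modular and quasimodular polynomials, the paper produces for each $\ell$ a modular form $\phi^{\Xi}_{\ell}\in M_{\mu+\nu+2\ell}(\G)$ which is visibly a \emph{bilinear differential operator} in $(\phi,\psi)$. By the uniqueness of Rankin--Cohen brackets among such operators, one gets $\phi^{\Xi}_{\ell}=b_\ell\,[\phi,\psi]^{(\mu,\nu)}_{\ell}$ for some scalar $b_\ell$, and $b_\ell$ is then read off by matching a \emph{single} coefficient (that of $\phi\,\psi^{(\ell)}$) on each side. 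This replaces your $(N{+}1)\times(N{+}1)$ inversion by $N{+}1$ one-line comparisons and delivers the closed form~\eqref{E:8q} directly. If you want to keep your elementary framework, you can import this trick: rather than inverting the full change-of-basis matrix, use that $\bigl([\phi,\psi]^{(\mu,\nu)}_{N-\ell}\bigr)^{(\ell)}$ is the unique element of your basis contributing to depth exactly $\ell$, identify the associated depth-$\ell$ modular component with a multiple of $[\phi,\psi]^{(\mu,\nu)}_{N-\ell}$ via uniqueness, and compare one coefficient.
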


The proof of this theorem is carried out by using a correspondence between
quasimodular forms and sequences of modular forms discussed in \cite{L09b}.
For example, each quasimodular form can be written as a linear combination
derivatives of a finite number of modular forms.

We would like to thank the referee for various helpful comments and
suggestions.

%%%%%%%%%%%%%%%%%%%%%%%%%%%%%%%%%%%%%%%%%%%%%%%%%%%%%%%%%%%%%%%%%%%%%%%%
%%%%%%%%%%%%%%%%%%%%%%%%%%%%%%%%%%%%%%%%%%%%%%%%%%%%%%%%%%%%%%%%%%%%%%%%
\section{\bf{Quasimodular and modular polynomials}} \label{S:qmp}
%%%%%%%%%%%%%%%%%%%%%%%%%%%%%%%%%%%%%%%%%%%%%%%%%%%%%%%%%%%%%%%%%%%%%%%%
%%%%%%%%%%%%%%%%%%%%%%%%%%%%%%%%%%%%%%%%%%%%%%%%%%%%%%%%%%%%%%%%%%%%%%%%

In this section we describe $SL(2, \bR)$-equivariant automorphisms of
spaces of polynomials introduced in \cite{L09b} which determine
correspondences between quasimodular polynomials and modular polynomials.

Let $\mH$ be the Poincar\'e upper half plane, and let $\mF$ be the ring of
holomorphic functions $f: \mH \to \bC$ satisfying the following growth
condition
\begin{equation} \label{E:mn}
|f(z)| \ll \biggl( \frac {{\rm Im}\, z} {1 + |z|^2} \biggr)^{-\nu}
\end{equation}
for some $\nu >0$ (see e.g.\ \cite[Section 17.1]{MR05} for a more precise
description of this condition).  We fix a nonnegative integer $m$ and
denote by $\mF_m [X]$ the complex vector space of polynomials in $X$ over
$\mF$ of degree at most $m$.  Thus $\mF_m [X]$ consists of polynomials of
the form
\begin{equation} \label{E:ge}
\Phi (z, X) = \sum^m_{r=0} \phi_r (z) X^r
\end{equation}
with $\phi \in \mF$ for $0 \leq r \leq m$.  If $\Phi (z, X) \in \mF_m [X]$
is as in \eqref{E:ge} and if $\gl$ is an integer with $\gl > 2m$, we set
\begin{equation} \label{E:xw}
(\gL^m_\gl \Phi) (z, X) = \sum^m_{r=0} \phi^\gL_r (z) X^r, \quad
(\Xi^m_\gl \Phi) (z, X) = \sum^m_{r=0} \phi^\Xi_r (z) X^r
\end{equation}
where
\begin{align}
\phi^\gL_r &= \frac 1{r!} \sum^{m-r}_{\ell=0} \frac {1} {\ell! (\gl -2r
-\ell-1)!}
\phi^{(\ell)}_{m -r -\ell} \label{E:5rr}\\
\label{E:5ow}
\phi^\Xi_r &= (\gl + 2r -2m -1) \sum^{r}_{\ell=0} \frac {(-1)^\ell} {\ell!}
(m-r +\ell)!\\
&\hspace{2.0in} \times (2r +\gl -2m -\ell-2)! \phi^{(\ell)}_{m-r +\ell} ,
\notag
\end{align}
for each $r \in \{ 0, 1, \ldots, m \}$.  Then it can be shown that
the resulting maps
\[ \gL^m_\gl, \Xi^m_\gl: \mF_m [X] \to \mF_m [X] \]
are complex linear isomorphisms with
\[ (\Xi^m_\gl)^{-1} = \gL^m_\gl \]
(see \cite{L09b}).

The group $SL(2, \bR)$ acts on the Poincar\'e upper half plane $\mH$ as
usual by linear fractional transformations, so that
\[ \g z = \frac {az+b} {cz+d} \]
for all $z \in \mH$ and $\g = \sm a&b\\ c&d \esm \in SL(2,\mbb R)$.  For
the same $z$ and $\g$, by setting
\begin{equation} \label{E:kz}%
\fJ (\g, z) = cz+d, \quad \fK (\g,z) = \frac c{cz+d} ,
\end{equation}
we obtain the maps $\fJ, \fK: SL(2, \bR) \times \mH \to \bC$
which satisfy
\[ \fJ (\g \g', z) = \fJ (\g, \g' z) \fJ (\g', z), \quad \fK (\g, \g' z) =
\fJ (\g', z)^2 (\fK (\g \g',z) - \fK (\g',z)) \]
for all $z \in \mH$ and $\g, \g' \in SL(2,\mbb R)$.

Given $\Phi (z, X) \in \mF_m [X]$ as in \eqref{E:ge} and elements $f \in
\mF$, $\g \in SL(2, \bR)$ and $\gl \in \bZ$, we set
\begin{equation} \label{E:ts0}
(f \mid_\gl \g) (z) = \fJ (\g, z)^{-\gl} f (\g z) ,
\end{equation}
\begin{equation} \label{E:us}
(\Phi \mid^X_\gl \g) (z, X) = \sum^m_{r=0} (\phi_r \mid_{\gl +2r}
\g) (z) X^r ,
\end{equation}
\begin{equation} \label{E:js}
(\Phi \md_\gl \g) (z, X) = \fJ (\g, z)^{-\gl} \Phi (\g z, \fJ (\g,
z)^2 ( X - \fK (\g, z)))
\end{equation}
for all $z \in \mH$.  Then $\mid_\gl$ determines an action of
$SL(2, \bR)$ on $\mH$ as usual, and the other two operations
$\mid^X_\gl$ and $\md_\gl$ determine actions of the same group on
$\mF_m [X]$.  If $\gL^m_\gl$ and $\Xi^m_\gl$ are the linear automorphisms
of $\mF_m [X]$ given by \eqref{E:xw}, then it is known that
\begin{equation} \label{E:9u}
((\gL^m_\gl \Phi) \md_\gl \g) (z,X) = \gL^m_\gl (\Phi \mid^X_{\gl
-2m} \g) (z,X) ,
\end{equation}
\begin{equation} \label{E:9w}
((\Xi^m_\gl \Phi) \mid^X_{\gl -2m} \g) (z,X) = \Xi^m_\gl (\Phi
\md_\gl \g) (z,X)
\end{equation}
for all $\g \in SL(2, \bR)$ (cf.\ \cite{L09b}).

We now consider a discrete subgroup $\G$ of $SL(2, \bR)$.  Then an element
$f \in \mF$ is a {\em modular form for $\G$ of weight $\gl$\/} if it
satisfies
\[%\begin{equation} \label{E:7w}
f \mid_\gl \g = f
\]%\end{equation}
for all $\g \in \G$, where the operation $\mid_\gl$ is given by
\eqref{E:ts0}.

\begin{dfn} \label{D:ns}
Let $\mid^X_\gl$ and $\md_\gl$ with $\gl \in \bZ$ be the
operations in \eqref{E:us} and \eqref{E:js}.

(i) An element $F (z,X) \in \mF_m [X]$ is a {\em modular
polynomial for
  $\G$ of weight $\gl$ and degree at most $m$\/} if it satisfies
\[%\begin{equation} \label{E:8w}
F \mid^X_\gl \g = F
\]%\end{equation}
for all $\g \in \G$.

(ii) An element $\Phi (z,X) \in \mF_m [X]$ is a {\em quasimodular
polynomial for $\G$ of weight $\gl$ and degree at most $m$\/} if
it satisfies
\[%\begin{equation} \label{E:9w}
\Phi \md_\gl \g = \Phi
\]%\end{equation}
for all $\g \in \G$.
\end{dfn}

We denote by $MP^m_\gl (\G)$ and $QP^m_\gl (\G)$ the spaces of
modular and quasimodular, respectively, polynomials for $\G$ of
weight $\gl$ and degree at most $m$. If a polynomial $F (z,X) \in
\mF_m [X]$ of the form
\[ F (z,X)  = \sum^m_{r=0} f_r (z) X^r \]
belongs to  $MP^m_\gl (\G)$, from \eqref{E:us} and Definition
\ref{D:ns}(i) we see that
\begin{equation} \label{E:k7}
f_r \in M_{\gl +2r} (\G)
\end{equation}
for $0 \leq r \leq m$.  From \eqref{E:9u}, \eqref{E:9w} and Definition
\ref{D:ns} it follows that the automorphisms $\gL^m_\gl$ and $\Xi^m_\gl$ of
$\mF_m [X]$ induce the isomorphisms
\begin{equation} \label{E:nw}
\gL^m_\gl: MP^m_{\gl-2m} (\G) \to QP^m_\gl (\G), \quad \Xi^m_\gl:
QP^m_\gl (\G) \to MP^m_{\gl-2m} (\G)
\end{equation}
for each $\gl \geq 2m$.

\begin{ex}
Given an integer $\gl \geq 2m$, we consider two modular forms
\[ \xi \in M_{\gl -2m} (\G), \quad \eta \in M_{\gl -2m +2} (\G)
\]
and the associated modular polynomial
\[ F (z,X) = \sum^m_{r=0} f_r (z) X^r \in MP_{\gl -2m} (\G)\]
with
\[ f_r = \begin{cases}
\xi & \text{if $r=0$;}\\
\eta & \text{if $r =1$;}\\
0 & \text{if $2 \leq r \leq m$.}
\end{cases}\]%
If $\gL^m_\gl F (z,X) = \sum^m_{r=0} f^\gL_k (z) X^r \in QP^m_\gl
(\G)$ is the corresponding quasimodular polynomial, from \eqref{E:5rr} we
obtain
\begin{align*}
f^\gL_k &= \frac {f^{(m-k)}_0} {k! (m-k)! (\gl -k -m -1)!} +
\frac {f^{(m-k -1)}_1} {k! (m-k -1)! (\gl -k -m)!}\\
&= \frac {(\gl -k -m) \xi^{(m-k)} + (m-k) \eta^{(m -k -1)}} {k!
(m-k)! (\gl -k -m)!}.
\end{align*}
Thus we have
\begin{equation} \label{E:jy}
\gL^m_\gl F (z,X) = \sum^m_{k=0} \frac {(\gl -k -m)
\xi^{(m-k)} (z) + (m-k) \eta^{(m -k -1)} (z)} {k! (m-k)! (\gl -k -m)!} X^k .
\end{equation}
%and the function
%\[ (\fS_0 \circ \gL^m_\gl) F = \frac
%{(\gl -m) \xi^{(m)} + m \eta^{(m -1)}} {m! (\gl -m)!} \]%
%is a quasimodular form belonging to $QM^m_\gl (\G)$.
\end{ex}

%%%%%%%%%%%%%%%%%%%%%%%%%%%%%%%%%%%%%%%%%%%%%%%%%%%%%%%%%%%%%%%%%%%%%%%%
%%%%%%%%%%%%%%%%%%%%%%%%%%%%%%%%%%%%%%%%%%%%%%%%%%%%%%%%%%%%%%%%%%%%%%%%
\section{\bf{Quasimodular forms}}% \label{S:qf}
%%%%%%%%%%%%%%%%%%%%%%%%%%%%%%%%%%%%%%%%%%%%%%%%%%%%%%%%%%%%%%%%%%%%%%%%
%%%%%%%%%%%%%%%%%%%%%%%%%%%%%%%%%%%%%%%%%%%%%%%%%%%%%%%%%%%%%%%%%%%%%%%%

In this section we discuss the correspondence between quasimodular
polynomials and quasimodular forms.  We also express the Dirichlet
series of a quasimodular form in terms of Dirichlet series of the modular
forms associated to the corresponding quasimodular polynomial.

Let $\mF$ be the ring of holomorphic functions on $\mH$ satisfying
\eqref{E:mn} as in Section \ref{S:qmp}, and let $\G$ be a discrete
subgroup of $SL(2, \bR)$ containing   translations.

\begin{dfn} \label{D:mq}
Given integers $m$ and $\gl$ with $m \geq 0$, an element $f \in \mF$ is a
{\em quasimodular form for $\G$ of weight $\gl$ and depth at most
$m$\/} if there are functions $f_0, \ldots, f_m \in \mF$ such that
\begin{equation} \label{E:qq1}
(f \mid_\gl \g) (z) = \sum^m_{r=0} f_r (z) \fK (\g, z)^r
\end{equation}
for all $z \in \mH$ and $\g \in \G$, where $\fK (\g, z)$ is as in
\eqref{E:kz} and $\mid_\gl$ is the operation in \eqref{E:ts0}.  We
denote by $QM^m_\gl (\G)$ the space of quasimodular forms for $\G$
of weight $\gl$ and depth at most $m$.
\end{dfn}

Quasimodular forms correspond to quasimodular polynomials as described
below.  If $0 \leq \ell \leq m$, we consider the complex linear map
\[ \fS_\ell: \mF_m [X] \to \mF \]
defined by
\[ \fS_\ell \biggl( \sum^m_{r=0} \phi_r (z) X^r \biggr) = \phi_\ell (z) \]
for all $z \in \mH$.  Then it can be shown (cf.\ \cite{CL08}) that
\[ \fS_\ell (QP^m_\gl (\G)) \subset QM^{m-\ell}_{\gl-2\ell} (\G) ;\]
hence we obtain the map
\begin{equation} \label{E:8t}
\fS_\ell: QP^m_\gl (\G) \to QM^{m-\ell}_{\gl-2\ell} (\G)
\end{equation}
for each $\ell$.  For $\ell =0$ it is known that the map
\[ \fS_0: QP^m_\gl (\G) \to QM^m_\gl (\G) \]
is an isomorphism whose inverse is the map
\begin{equation} \label{E:sp}
\mQ_\gl^m: QM^m_\gl (\G) \to QP^m_{\gl} (\G) .
\end{equation}
defined by
\[%\begin{equation} \label{E:tp}
(\mQ_\gl^m f) (z,X) = \sum^m_{r=0} f_r (z) X^r
\]%\end{equation}
for a quasimodular form $f \in QM^m_\gl (\G)$ and functions $f_0, \ldots,
f_m \in \mF$ as in \eqref{E:qq1}.

Let $\psi \in QM^m_\gl (\G)$ be a quasimodular form whose Fourier
expansion is of the form
\begin{equation} \label{E:4h}
\psi (z) = \sum^\infty_{k =0} a_k e^{2\pi i kz/h}
\end{equation}
with $h \in \bR$, so that the corresponding Dirichlet series is given by
\begin{equation} \label{E:6p}
L (\psi, s) = \sum^\infty_{n=1} \frac {a_n} {n^s}
\end{equation}
where it converges when ${\rm Re} \, s \gg 0.$
For $0 \leq r \leq
m$, using \eqref{E:k7} and the isomorphisms in \eqref{E:nw} and
\eqref{E:sp}, we see that the function $(\fS_r \circ \Xi^m_\gl
\circ \mQ^m_\gl) \psi$ is a modular form belonging to $M_{\gl -2m
+2r} (\G)$. We now set
\begin{equation} \label{E:k8}
f^\psi_{r} = (\fS_r \circ \Xi^m_\gl \circ \mQ^m_\gl) \psi \in
M_{\gl -2m +2r} (\G) ,
\end{equation}
and assume that its Fourier expansion is given by
\begin{equation} \label{E:5h}
f^\psi_{r} (z) = \sum^\infty_{k =0} c_{r,k} e^{2\pi i kz/h} .
\end{equation}
Thus the corresponding Dirichlet series can be written as
\begin{equation} \label{E:7p}
L (f^\psi_{r}, s) = \sum^\infty_{n=1} \frac {c_{r,n}} {n^s},
\end{equation}
where it converges for ${\rm Re} \, s \gg 0.$

\begin{prop} \label{P:y4}
The Dirichlet series in \eqref{E:6p} and \eqref{E:7p} satisfy the
relation
\begin{equation} \label{E:jf}
L (\psi, s) = \sum^m_{\ell =0} \frac {(2\pi i)^\ell} {\ell! (\gl
-\ell -1)! h^\ell} L (f^\psi_{m -\ell}, s -\ell),
\end{equation}
where it converges for  ${\rm Re} \, s \gg 0.$

\end{prop}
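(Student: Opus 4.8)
The plan is to undo the chain of isomorphisms used to define the modular forms $f^\psi_r$, which will express $\psi$ itself as an explicit linear combination of derivatives of those modular forms; after that the proposition follows by comparing Fourier coefficients.

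First, observe that by \eqref{E:k8} the polynomial $F(z,X):=\sum_{r=0}^m f^\psi_r(z)X^r$ is precisely $(\Xi^m_\gl\circ\mQ^m_\gl)\psi\in MP^m_{\gl-2m}(\G)$. Since $\gL^m_\gl=(\Xi^m_\gl)^{-1}$ and $\fS_0\circ\mQ^m_\gl$ is the identity on $QM^m_\gl(\G)$, applying $\gL^m_\gl$ to $F$ recovers $\mQ^m_\gl\psi$, and taking its constant term in $X$ recovers $\psi$; thus $\psi=\fS_0(\gL^m_\gl F)$. Substituting $\phi_j=f^\psi_j$ into the explicit formula \eqref{E:5rr} for $\gL^m_\gl$ and specializing to $r=0$, the constant term equals $\sum_{\ell=0}^m\frac{1}{\ell!(\gl-\ell-1)!}(f^\psi_{m-\ell})^{(\ell)}$, hence
\[
\psi=\sum_{\ell=0}^m\frac{1}{\ell!\,(\gl-\ell-1)!}\,(f^\psi_{m-\ell})^{(\ell)}.
\]
The standing hypothesis $\gl\ge 2m$ behind the isomorphisms \eqref{E:nw} guarantees that every factorial occurring here is the factorial of a nonnegative integer.

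Next, I would differentiate the Fourier expansion \eqref{E:5h} termwise, so that $(f^\psi_r)^{(\ell)}(z)=\sum_{k\ge0}c_{r,k}(2\pi i k/h)^\ell e^{2\pi i kz/h}$. Matching the coefficient of $e^{2\pi i nz/h}$ in $\psi$ from \eqref{E:4h} against the identity just obtained gives, for each $n\ge1$,
\[
a_n=\sum_{\ell=0}^m\frac{(2\pi i)^\ell}{\ell!\,(\gl-\ell-1)!\,h^\ell}\,n^\ell\,c_{m-\ell,n}.
\]
Dividing by $n^s$, summing over $n\ge1$, and interchanging the finite sum over $\ell$ with the sum over $n$ yields
\begin{align*}
L(\psi,s)&=\sum_{\ell=0}^m\frac{(2\pi i)^\ell}{\ell!\,(\gl-\ell-1)!\,h^\ell}\sum_{n\ge1}\frac{c_{m-\ell,n}}{n^{s-\ell}}\\
&=\sum_{\ell=0}^m\frac{(2\pi i)^\ell}{\ell!\,(\gl-\ell-1)!\,h^\ell}\,L(f^\psi_{m-\ell},s-\ell),
\end{align*}
which is exactly \eqref{E:jf}. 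For the convergence claim, note that by \eqref{E:k8} each $f^\psi_{m-\ell}$ lies in $M_{\gl-2\ell}(\G)$, so $L(f^\psi_{m-\ell},s)$ converges absolutely for ${\rm Re}\,s\gg0$; shifting $s$ by the fixed integer $\ell$ for the finitely many $\ell\in\{0,\dots,m\}$ and forming a finite linear combination preserves this, and within the common half-plane of absolute convergence the interchange of sums above is justified.

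The argument has no serious obstacle: it is essentially the dictionary "differentiation $\leftrightarrow$ multiplication of the $n$-th Fourier coefficient by $(2\pi i n/h)^\ell$" combined with the first step. The only point that requires care is the bookkeeping there — correctly identifying $\psi$ with the $X^0$-component of $\gL^m_\gl F$ and reading off \eqref{E:5rr} at $r=0$ — together with verifying that the factorials produced are legitimate under the assumption $\gl\ge 2m$.
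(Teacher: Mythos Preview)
Your proof is correct and follows essentially the same approach as the paper: you invert the isomorphism $\Xi^m_\gl$ via $\gL^m_\gl$, read off the $X^0$-coefficient from \eqref{E:5rr} to express $\psi$ as a combination of derivatives of the $f^\psi_{m-\ell}$, and then compare Fourier coefficients to obtain the relation between Dirichlet series. Your additional remarks on the legitimacy of the factorials under $\gl\ge 2m$ and on absolute convergence are accurate and add clarity not made explicit in the paper.
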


\begin{proof}
From \eqref{E:k8} we see that
\[ ((\Xi^m_\gl \circ \mQ^m_\gl) \psi) (z, X) = \sum^m_{r=0}
f^\psi_{r} (z) X^r \in MP^m_{\gl -2m} (\G) .\]%
Applying the isomorphism $\gL^m_\gl$ in \eqref{E:nw} to this
relation and using \eqref{E:xw} and \eqref{E:5rr}, we have
\[ (\mQ^m_\gl \psi) (z, X) = ((\gL^m_\gl \circ \Xi^m_\gl \circ
\mQ^m_\gl) \psi) (z, X) = \sum^m_{r=0} \psi_r (z) X^r \in
QP^m_\gl (\G) ,\]%
where
\[ \psi_r = \frac {1}{r!}\sum^{m -r}_{\ell =0} \frac 1{\ell!
(\gl -2r -\ell -1)!} (f^\psi_{m -r -\ell})^{(\ell)} \]%
for $0 \leq r \leq m$.  From this and the fact that $(\mQ^m_\gl)^{-1} =
\fS_0$ we obtain
\[ \psi = \fS_0 (\mQ^m_\gl \psi) = \psi_0 = \sum^m_{\ell =0} \frac
1{\ell! ( \gl -\ell -1)!} (f^\psi_{m -\ell})^{(\ell)} .\]%
Using \eqref{E:5h}, we have
\[ (f^\psi_{m -\ell})^{(\ell)} (z) = \sum^\infty_{k =0}
c_{m -\ell,k} \biggl( \frac {2\pi i k} {h} \biggr)^\ell e^{2\pi i
kz/h} ;\]%
hence we obtain
\[ \psi (z) = \sum^\infty_{k =0} \sum^m_{\ell =0}
\frac {(2\pi i k)^\ell c_{m -\ell,k}} {\ell! ( \gl -\ell -1)!
h^\ell} e^{2\pi i kz/h}  .\]%
Comparing this with \eqref{E:4h}, we have
\[ a_k = \sum^m_{\ell =0}
\frac {(2\pi i k)^\ell c_{m -\ell,k}} {\ell! ( \gl -\ell -1)!
h^\ell} \]%
for $k \geq 0$.  Thus from \eqref{E:6p} and \eqref{E:7p} we see
that
\begin{align*}
L (\psi, s) &= \sum^\infty_{n=1} \sum^m_{\ell =0} \frac {(2\pi i
n)^\ell c_{m -\ell,n}} {\ell! ( \gl -\ell -1)! h^\ell n^s}\\
&= \sum^m_{\ell =0} \sum^\infty_{n=1} \frac {(2\pi i)^\ell
c_{m -\ell,n}} {\ell! ( \gl -\ell -1)! h^\ell n^{s -\ell}}\\
&= \sum^m_{\ell =0} \frac {(2\pi i)^\ell} {\ell! ( \gl -\ell -1)!
h^\ell} L (f^\psi_{m-\ell}, s -\ell) ;
\end{align*}
hence the proposition follows.
\end{proof}

%%%%%%%%%%%%%%%%%%%%%%%%%%%%%%%%%%%%%%%%%%%%%%%%%%%%%%%%%%%%%%%%%%%%%%%%
%%%%%%%%%%%%%%%%%%%%%%%%%%%%%%%%%%%%%%%%%%%%%%%%%%%%%%%%%%%%%%%%%%%%%%%%
\section{\bf{Proof of Theorem 1.1}}% \label{S:rb}
%%%%%%%%%%%%%%%%%%%%%%%%%%%%%%%%%%%%%%%%%%%%%%%%%%%%%%%%%%%%%%%%%%%%%%%%
%%%%%%%%%%%%%%%%%%%%%%%%%%%%%%%%%%%%%%%%%%%%%%%%%%%%%%%%%%%%%%%%%%%%%%%%

We first recall that the Rankin-Cohen brackets are the bilinear maps
\[ [\; , \;]^{(k, \ell)}_w: M_{k} (\G) \times M_{\ell} (\G) \to
M_{k +\ell +2w} (\G) \]%
defined by
\begin{equation} \label{E:bp}
[f , g]^{(k, \ell)}_w = \sum^w_{r=0} (-1)^r \bi {k +w -1} {w-r}
\bi {\ell +w -1} {r} f^{(r)} g^{(w-r)}
\end{equation}
for $k, \ell \in \bZ$, $w \geq 0$, $f \in M_{k} (\G)$ and $g \in M_{\ell}
(\G)$ (see e.g.\ \cite{Z94}, \cite{CM97}).  It is known that the
Rankin-Cohen brackets are unique up to constant.  More precisely, if
\[ B_w: M_{k} (\G) \times M_{\ell} (\G) \to
M_{k +\ell +2w} (\G) \]%
is a bilinear differential operator, there is a constant $c \in
\bC$ such that
\[ B_w(f,g) = c [f , g]^{(k, \ell)}_w \]
for all $(f,g) \in M_{k} (\G) \times M_{\ell} (\G)$ .

\smallskip

\noindent
\textbf{Proof of Theorem \ref{maintheorem}.}
Given nonnegative integers $\mu$ and $\nu$, we consider modular forms $\phi
\in M_{\mu} (\G)$ and $\psi \in M_{\nu} (\G)$.  Then their derivatives
$\phi^{(m)}$ and $\psi^{(n)}$ with $m, n \geq 0$ are quasimodular forms
with
\[ \phi^{(m)} \in QM^m_{\mu +2m}, \quad \psi^{(n)} \in QM^n_{\nu +2n} ;\]
hence we see that $\phi^{(m)} \psi^{(n)}$ is a quasimodular form belonging
to $QM^{m +n}_{\mu +\nu +2m +2n} (\G)$.  By setting $\gl = \mu +2m$, $\xi =
\phi$ and $\eta =0$ in \eqref{E:jy}, we obtain
\begin{equation} \label{E:6g}
\gL^m_{\mu +2m} \Phi (z,X) = \sum^m_{k=0} \frac
{\phi^{(m-k)}} {k! (m-k)! (\mu +m -k -1)!} X^k \in QP^m_{\mu +2m}
(\G) ,
\end{equation}
\[ ((\fS_0 \circ \gL^m_{\mu +2m})
\Phi) (z) = \frac {\phi^{(m)}} {m! (\mu +m-1)!} \in
QM^m_{\mu +2m} (\G) .\]%
Similarly, if $\psi \in M_{\nu} (\G)$, we have
\[ \gL^n_{\nu +2n} \Psi (z,X) = \sum^n_{\ell=0}
\frac {\psi^{(n-\ell)} (z)} {\ell! (n-\ell)! (\nu+n -\ell -1)!}
X^\ell \in QP^n_{\nu +2n} (\G) ,\]%
\[ ((\fS_0 \circ \gL^n_{\nu +2n})
\Psi) (z) = \frac {\psi^{(n)}} {n! (\nu +n-1)!} \in
QM^n_{\nu +2n} (\G) .\]%
Thus, if we set
\[ F (z,X) = (\gL^m_{\mu +2m} \Phi (z,X)) \cdot (\gL^n_{\nu +2m} \Psi
(z,X)) ,\]
it is a quasimodular polynomial belonging to $QP^{m +n}_{\mu
+\nu +2m +2n} (\G)$, and we obtain
\begin{align*}
&F (z,X)\\
&=\sum^m_{k=0} \sum^n_{\ell=0} \frac {\phi^{(m-k)} (z)
\psi^{(n-\ell)} (z)} {k! \ell!(m-k)!  (n-\ell)! (\mu +m -k -1)!
(\nu +n -\ell -1)!} X^{k +\ell}\\
&= \sum^{m +n}_{r=0} \sum^r_{k=0} K^{m,n;\mu,\nu}_{k,r}
\phi^{(m-k)} (z) \psi^{(n-r +k)} (z) X^r \in QP^{m +n}_{\mu +\nu
+2m +2n} (\G) ,
\end{align*}
where
\[ K^{m,n;\mu, \nu}_{k,r} = \frac {1} {k! (r-k)! (m-k)!
(n-r +k)!(\mu +m -k -1)! (\nu +n -r +k -1)!} \]%
for $0 \leq k \leq r \leq m+n$; here we assume that $\phi^{(a)} =0$ and
$\psi^{(b)} =0$ for $a,b <0$.  Using \eqref{E:xw} and \eqref{E:5ow}, we have
\[ \Xi^{m+n}_{\mu +\nu +2m +2n} F (z,X) = \sum^{m+n}_{j =0}
\phi^\Xi_j(z) X^j \in MP^{m+n}_{\mu +\nu} (\G),\]%
where
\begin{align*}
\phi^\Xi_\ell &= (\mu +\nu + 2\ell -1) \sum^{\ell}_{j=0}
\frac {(-1)^j} {j!} (m +n-\ell +j)!\\
&\hspace{2.0in} \times (2\ell +\mu +\nu -j-2)! \phi^{(j)}_{m +n
-\ell +j}
\end{align*}
with
\[ \phi_r = \sum^r_{k=0} K^{m,n;\mu, \nu}_{k,r} \phi^{(m-k)}
\psi^{(n-r +k)} \]%
for $0 \leq \ell, r \leq m+n$.  However, we have
\begin{align*}
\phi^{(j)}_{m +n -\ell +j} &= \sum^{m +n -\ell +j}_{k=0}
K^{m,n;\mu, \nu}_{k,m +n -\ell
+j} (\phi^{(m-k)} \psi^{(\ell +k -m -j)})^{(j)}\\
&= \sum^{m +n -\ell +j}_{k=0} K^{m,n;\mu, \nu}_{k,m +n -\ell +j}
\sum^j_{p=0} \bi {j} {p} \phi^{(m-k +j -p)} \psi^{(\ell +k +p -m
-j)};
\end{align*}
Hence it follows that
\begin{align} \label{E:cfs}
\phi^\Xi_\ell &= (\mu +\nu  + 2\ell -1)\\
&\hspace{.3in} \times \sum^{\ell}_{j=0} \sum^{m +n
-\ell +j}_{k=0} \sum^j_{p=0}
\frac {(-1)^j} {j!} \bi {j} {p} (m +n-\ell +j)! \notag\\
&\hspace{.8in} \times (2\ell +\mu +\nu -j-2)!
K^{m,n;\mu, \nu}_{k,m +n -\ell +j} \notag\\
&\hspace{1.4in} \times \phi^{(m-k +j -p)} \psi^{(\ell +k +p -m
-j)} .\notag
\end{align}
for $0 \leq \ell \leq m+n$.  Since $\phi^\Xi_\ell \in M_{\mu +\nu
+2\ell} (\G)$, we obtain the bilinear differential operator
\[ (\phi, \psi) \mapsto \phi^\Xi_\ell : M_{\mu} (\G)
\times M_{\nu} (\G) \to M_{\mu +\nu +2\ell} (\G) \]%
on $M_{\mu} (\G) \times M_{\nu} (\G)$ for each $\ell$.  Thus,
using the uniqueness of Rankin-Cohen brackets, we see that
\begin{equation} \label{E:ba}
\phi^\Xi_\ell = b_\ell [\phi, \psi]^{(\mu, \nu)}_\ell
\end{equation}
for some $b_\ell \in \bC$.  Using $p=j$ and $k =m$, the
coefficient of $\phi \psi^{(\ell)}$ in \eqref{E:cfs} is given by
\begin{equation} \label{E:nx}
(\mu +\nu  + 2\ell -1) \sum^\ell_{j=0} \frac {(-1)^j} {j!} (m
+n-\ell +j)! (2\ell +\mu +\nu -j -2)! K^{m,n;\mu, \nu}_{m, m +n
-\ell +j} ,
\end{equation}
where
\[ K^{m,n;\mu, \nu}_{m, m +n -\ell +j} = \frac {1} {m! (n -\ell +j)!
(\ell-j)!(\mu -1)! (\nu +\ell -j -1)!} .\]%
On the other hand, by \eqref{E:bp} the coefficient of $\phi
\psi^{(\ell)}$ in \eqref{E:ba} is equal to
\[ \bi {\mu +\ell -1} {\ell} b_\ell = \frac {(\mu +\ell -1)! b_\ell}
{\ell! (\mu -1)!} .\]%
Comparing this with \eqref{E:nx}, we obtain
\begin{equation} \label{E:hd}
b_\ell = \frac {(\mu +\nu  + 2\ell -1) \ell!} {(\mu +\ell -1)! m!}
\sum^\ell_{j=0} (-1)^j \frac {(m +n-\ell +j)! (2\ell +\mu +\nu -j
-2)!} {j! (n -\ell +j)! (\ell-j)! (\nu +\ell -j -1)!}
\end{equation}
for $0 \leq \ell \leq m+n$.  Since we have
\[ \fS_0  F = \frac {\phi^{(m)} \psi^{(n)}}
{m! n! (\mu +m -1)! (\nu +n-1)!} \in QM^{m+n}_{\mu +\nu +2m +2n}
(\G) ,\]
it follows that
\[ (\fS_r \circ \Xi^{m+n}_{\mu +\nu +2m +2n} \circ \mQ^{m+n}_{\mu +\nu +2m
  +2n}) (\fS_0 F) = (\fS_r \circ \Xi^{m+n}_{\mu +\nu +2m +2n}) F =
\phi^\Xi_r \]
for $0 \leq r \leq m+n$.  Thus, using Proposition \ref{P:y4}, we obtain
\begin{align*}
L (\phi^{(m)} \psi^{(n)}, s) &= m! n! (\mu +m-1)! (\nu +n -1)!\\
&\hspace{.4in} \times \sum^{m +n}_{\ell =0} \frac {(2\pi i)^\ell}
{\ell! (\mu +\nu +2m +2n -\ell -1)! h^\ell} L (\phi^\Xi_{m+n
-\ell}, s -\ell)\\
&= m! n! (\mu +m-1)! (\nu +n -1)!\\
&\hspace{.4in} \times \sum^{m +n}_{\ell =0} \frac {(2\pi i)^\ell
b_\ell} {\ell! (\mu +\nu +2m +2n -\ell -1)! h^\ell} L ([\phi,
\psi]^{(\mu, \nu)}_{m+n -\ell}, s -\ell) ;
\end{align*}
hence the theorem follows from this and \eqref{E:hd}.

\smallskip

%\noindent
%\textbf{Proof of Corollary \ref{maincor}.}
%This follows immediately from \eqref{E:ju} and\eqref{E:bp}.

%%%%%%%%%%%%%%%%%%%%%%%%%%%%%%%%%%%%%%%%%%%%%%%%%%%%%%%%%%%%%%%%%%%%%%%%
%%%%%%%%%%%%%%%%%%%%%%%%%%%%%%%%%%%%%%%%%%%%%%%%%%%%%%%%%%%%%%%%%%%%%%%%
\section{\bf{Examples}}% \label{S:xe}
%%%%%%%%%%%%%%%%%%%%%%%%%%%%%%%%%%%%%%%%%%%%%%%%%%%%%%%%%%%%%%%%%%%%%%%%
%%%%%%%%%%%%%%%%%%%%%%%%%%%%%%%%%%%%%%%%%%%%%%%%%%%%%%%%%%%%%%%%%%%%%%%%

In this section we consider two modular forms
\[ \phi \in M_{\mu} (\G), \quad \psi \in M_{\nu} (\G) ,\]
and provide examples of the formula \eqref{E:ju} for $(m,n) = (1,1), (2,0),
(0,2)$.

We first consider the case where $m = n =1$ by regarding the given modular
forms as modular polynomials
\[ \Phi (z,X) = \phi (z) \in MP^1_\mu (\G), \quad \Psi (z,X) = \psi (z) \in
MP^1_\nu (\G) .\]
Then  from \eqref{E:6g} we see that
\begin{align*}
(\gL^1_{\mu +2} &\Phi (z,X)) \cdot (\gL^1_{\nu +2} \Psi (z,X))\\
&= \frac {1} {\mu! \nu!} \biggl[ \phi' (z) \psi' (z) + \Bigl( \nu
\phi' (z) \psi (z) + \mu \phi (z) \psi' (z) \Bigr)X + \mu \nu \phi
(z) \psi (z) X^2 \biggr] ,
\end{align*}
which is a quasimodular form belonging to $QP^2_{\mu +\nu +4} (\G)$.  Thus,
if we set
\[ F (z,X) = \mu! \nu! (\gL^1_{\mu +2} \Phi (z,X)) \cdot (\gL^1_{\nu +2}
\Psi (z,X)) ,\]
we have
\[ F (z,X) = f_0 (z) + f_1 (z) X + f_2 (z) X^2 \in QP^2_{\mu +\nu +4} (\G)
,\]
where
\[ f_0 = \phi' \psi' = \fS_0 F \in QM^2_{\mu +\nu +4} (\G), \quad f_1 = \nu
\phi' \psi + \mu \phi \psi' , \quad f_2 = \mu \nu
\phi \psi .\]
Using \eqref{E:5ow}, we have
\[ (\Xi^2_{\mu +\nu +4} F) (z,X) = \sum^2_{r=0} f^\Xi_r (z) X^r ,\]
where
\begin{align*}
f^\Xi_0 &= 2 \mu \nu
(\mu +\nu -1)! \phi \psi,\\
f^\Xi_1 &= (\mu + \nu +1) (\mu +\nu -1)! (\mu -\nu) \Bigl( \mu \phi \psi'
-\nu \phi' \psi \Bigr) ,\\
f^\Xi_2 &= -(\mu +\nu +3) (\mu +\nu)! \\
&\hspace{.5in} \times \Bigl( \mu (\mu +1) \phi \psi'' -2 (\mu +1)
(\nu +1) \phi' \psi' +\nu (\nu +1) \phi'' \psi\Bigr).
\end{align*}
However, using \eqref{E:bp}, we have
\begin{align} \label{E:8r}
[\phi, \psi]^{(\mu, \nu)}_0 &= \phi \psi ,\\
[\phi, \psi]^{(\mu, \nu)}_1 &= \mu \phi \psi' - \nu \phi' \psi , \notag\\
[\phi, \psi]^{(\mu, \nu)}_2 &= \frac 12 \Bigl( \mu (\mu+1) \phi
\psi'' -2 (\mu +1) (\nu +1) \phi' \psi' +\nu (\nu +1) \phi'' \psi
\Bigr) \notag
\end{align}
Thus we see that
\begin{align*}
f^\Xi_0 &= 2 \mu \nu (\mu +\nu -1)! [\phi,
\psi]^{(\mu, \nu)}_0\\
f^\Xi_1 &= (\mu -\nu) (\mu +\nu +1) (\mu + \nu -1)! [\phi,
\psi]^{(\mu, \nu)}_1\\
f^\Xi_2 &= -2 (\mu +\nu +3) (\mu +\nu)! [\phi, \psi]^{(\mu,
\nu)}_2 .
\end{align*}
From this and \eqref{E:jf} with $\gl = \mu +\nu +4$ we obtain
\begin{align} \label{E:fp1}
L (\phi' \psi', s) &=  \sum^2_{j =0} \frac {(2\pi
  i)^j} {j! (\mu +\nu +3 -j)! h^j} L (f^\Xi_{2 -j}, s -j)\\
&= -\frac {2} {(\mu + \nu +2) (\mu + \nu +1)} L ([\phi,
\psi]^{(\mu, \nu)}_2, s) \notag\\
& \hspace{.5in}+ \frac {2\pi i (\mu - \nu)} {(\mu +\nu +2) (\mu
  +\nu) h} L([\phi, \psi]^{(\mu, \nu)}_1, s-1) \notag\\
& \hspace{.8in} + \frac {(2\pi i)^2 \mu \nu } {(\mu
  +\nu +1) (\mu +\nu) h^2} L ([\phi, \psi]^{(\mu, \nu)}_0, s -2). \notag
\end{align}

We now consider the case where $m=2$ and $n=0$ by regarding the given
modular forms as the modular polynomials
\[ \Phi (z,X) = \phi (z) \in MP^2_\mu (\G), \quad \Psi (z,X) = \psi (z) \in
MP^0_\nu (\G) , \]
so that from  \eqref{E:6g} we obtain
\begin{align*}
(\gL^2_{\mu +4} &\Phi (z,X)) \cdot (\gL^0_{\nu} \Psi (z,X))\\
&=\frac 1{(\nu-1)!} \biggl( \frac {\phi'' (z) \psi (z)} {2
  (\mu+1)!} + \frac {\phi' (z) \psi (z)} {\mu!} X + \frac {\phi (z) \psi
  (z)} {2 (\mu-1)!} X^2 \biggr) ,
\end{align*}
which is a quasimodular polynomial belonging to $QP^2_{\mu +\nu +4} (\G)$.
Thus, if we set
\[ G (z,X) = 2 (\nu-1)! (\mu+1)! (\gL^2_{\mu +4} \Phi (z,X)) ,\]
then we have
\[ G (z,X) = g_0 (z) + g_1 (z) X + g_2 (z) X^2 \in QP^2_{\mu +\nu +4}
(\G) ,\]%
where
\[ g_0 = \phi''  \psi = \fS_0 G \in QM^2_{\mu +\nu +4} (\G), \quad g_1 =
2(\mu+1) \phi' \psi , \quad g_2 = \mu (\mu+1)\phi  \psi .\]%
Using this, \eqref{E:5ow} and \eqref{E:8r}, we see that
\[ (\Xi^2_{\mu +\nu +4} G) (z,X) = \sum^2_{r=0}
g^\Xi_r (z) X^r ,\]
where
\begin{align*}
g^\Xi_0 &= 2 (\mu +\nu -1)! g_2
=  2\mu (\mu+1) (\mu +\nu -1)! [\phi, \psi]^{(\mu, \nu)}_0,\\
g^\Xi_1 &= (\mu +\nu +1) \Bigl( (\mu +\nu)! g_1 - 2 (\mu +\nu -1)!
g'_2 \Bigr)\\
&= 2 (\mu +\nu +1) (\mu +\nu -1)! \Bigl( \nu \phi' \psi - \mu
\phi  \psi' \Bigr)\\
&= - 2 (\mu +\nu +1) (\mu +\nu -1)! [\phi, \psi]^{(\mu, \nu)}_1 ,\\
g^\Xi_2 &= (\mu + \nu +3) \Bigl( (\mu + \nu +2)! g_0 - (\mu + \nu +1)!
g'_1 + (\mu + \nu)! g''_2 \Bigr)\\
&= (\mu + \nu +3) (\mu +\nu)! \Bigl( (\mu +
\nu +2) (\mu + \nu +1) \phi''  \psi\\
&\hspace{1.8in} - 2 (\mu +1) (\mu + \nu +1) (\phi''  \psi  +
\phi'  \psi')\\
&\hspace{2.2in} + \mu (\mu + 1) (\phi''  \psi  + 2\phi'
  \psi'  +\phi  \psi'') \Bigr)\\
&= (\mu + \nu +3) (\mu +\nu)! \Bigl( \nu (\nu +1) \phi'' \psi + 2 (\mu +1) (\nu +1) \phi' \psi' + \mu (\mu
+1) \phi \psi'' \Bigr)\\
&= 2 (\mu + \nu +3) (\mu +\nu)! [\phi, \psi]^{(\mu, \nu)}_2 .
\end{align*}
From this and \eqref{E:jf}, we have
\begin{align} \label{E:dd}
L (\phi'' \psi, s) &= \sum^2_{j =0} \frac {(2\pi
  i)^j} {j! (\mu +\nu +3 -j)! h^j} L (g^\Xi_{2 -j}, s -j)\\
&=  \frac {2} {(\mu +\nu +2) (\mu +\nu +1)} L ([\phi,
\psi]^{(\mu, \nu)}_2, s) \notag\\
&\hspace{.3in} - \frac {2 (2\pi i) (\mu+1)} {(\mu +\nu +2) (\mu +\nu) h}
L ( [\phi, \psi]^{(\mu, \nu)}_1, s -1)) \notag\\
&\hspace{.5in} + \frac {(2\pi
  i)^2 \mu (\mu+1)} {(\mu +\nu +1) (\mu +\nu) h^2} L ([\phi,
\psi]^{(\mu, \nu)}_0, s -2) . \notag
\end{align}

The case where $m=0$ and $n=2$ can be obtained from \eqref{E:dd} and the
fact that Rankin-Cohen brackets $[\phi, \psi]^{(\mu, \nu)}_w$ are
$(-1)^w$-symmetric.  Thus we see that
\begin{align*}
L (\phi \psi'', s) &=  \frac {2} {(\mu +\nu +2) (\mu +\nu +1)} L ([\psi,
\phi]^{(\mu, \nu)}_2, s)\\
&\hspace{.3in} - \frac {2 (2\pi i) (\nu+1)} {(\mu +\nu +2) (\mu +\nu) h}
L ( [\psi, \phi]^{(\mu, \nu)}_1, s -1)\\
&\hspace{.5in} + \frac {(2\pi
  i)^2 \nu (\nu+1)} {(\mu +\nu +1) (\mu +\nu) h^2} L ([\psi,
\phi]^{(\mu, \nu)}_0, s -2)\\
&=  \frac {2} {(\mu +\nu +2) (\mu +\nu +1)} L ([\phi,
\psi]^{(\mu, \nu)}_2, s)\\
&\hspace{.3in} + \frac {2 (2\pi i) (\nu+1)} {(\mu +\nu +2) (\mu +\nu) h}
L ( [\phi, \psi]^{(\mu, \nu)}_1, s -1)\\
&\hspace{.5in} + \frac {(2\pi
  i)^2 \nu (\nu+1)} {(\mu +\nu +1) (\mu +\nu) h^2} L ([\phi,
\psi]^{(\mu, \nu)}_0, s -2) .
\end{align*}

\medskip

%%%%%%%%%%%%%%%%%%%%%%%%%%%%%%%%%%%%%%%%%%%%%%%%%%%%%%%%%%%%%%%%%%%%%%%%
%%%%%%%%%%%%%%%%%%%%%%%%%%%%%%%%%%%%%%%%%%%%%%%%%%%%%%%%%%%%%%%%%%%%%%%%
\section{\bf{Concluding remarks}}
%%%%%%%%%%%%%%%%%%%%%%%%%%%%%%%%%%%%%%%%%%%%%%%%%%%%%%%%%%%%%%%%%%%%%%%%
%%%%%%%%%%%%%%%%%%%%%%%%%%%%%%%%%%%%%%%%%%%%%%%%%%%%%%%%%%%%%%%%%%%%%%%%

Given two modular forms $\phi \in M_{\mu} (\G)$ and $\psi \in M_{\nu}
(\G)$, from \eqref{E:bp} we see that
\[ L ([\phi , \psi]^{(k, \ell)}_w, s) = \sum^w_{r=0} (-1)^r \bi {k +w -1}
{w-r} \bi {\ell +w -1} {r} L (\phi^{(r)} \psi^{(w-r)}, s) \]
for each nonnegative integer $w$.  Using this and \eqref{E:ju}, we have
\begin{align*}
L ([\phi , \psi]^{(\mu, \nu)}_w, s) &= \sum^{w}_{\ell =0}
\sum^w_{r=0}
(-1)^r \bi {\mu +w -1} {w-r} \bi {\nu +w -1} {r}\\
&\hspace{1.2in} \times a^{r,w-r}_{\mu,\nu} (\ell) L ([\phi,
\psi]^{(\mu, \nu)}_{w -\ell}, s -\ell) ,
\end{align*}
where the constants $a^{r,w-r}_{\mu,\nu} (\ell) \in \bC$ are as
in \eqref{E:8q}.  Hence we obtain the identities
\[ \sum^w_{r=0} (-1)^r \bi {\mu +w -1} {w-r} \bi {\nu +w -1} {r}
a^{r,w-r}_{\mu,\nu} (0) =1, \]
\[ \sum^w_{r=0} (-1)^r \bi {\mu +w -1} {w-r} \bi {\nu +w -1} {r}
a^{r,w-r}_{\mu,\nu} (\ell) =0  \]
for $1 \leq \ell \leq w$.

\medskip

\providecommand{\bysame}{\leavevmode\hbox to3em{\hrulefill}\thinspace}
\providecommand{\MR}{\relax\ifhmode\unskip\space\fi MR }
% \MRhref is called by the amsart/book/proc definition of \MR.
\providecommand{\MRhref}[2]{%
  \href{http://www.ams.org/mathscinet-getitem?mr=#1}{#2}
}
\providecommand{\href}[2]{#2}

%%%%%%%%%%%%%%%%%%%%%%%%%%%%%%%%%%%%%%%%%%%%%%%%%%%%%%%%%%%%%%%%%%%%%%%%
%%%%%%%%%%%%%%%%%%%%%%%%%%%%%%%%%%%%%%%%%%%%%%%%%%%%%%%%%%%%%%%%%%%%%%%%
%\bibliographystyle{amsplain}
%\bibliography{jr,lee}
\end{document}